\newtheorem{thm}{Theorem}[section]
\newtheorem{prop}[thm]{Proposition}
\theoremstyle{definition}
\newtheorem{defin}[thm]{Definition}
\newtheorem{rem}[thm]{Remark}
\newcommand{\C}{\mathbb{C}}
\newcommand{\T}{\mathcal{T}}
\begin{document}

\baselineskip=17pt

\title{A remark on projective limits of function spaces}
\author{A. Kampoukou and V. Nestoridis}
\address{ Department of Mathematics\\National and Kapodistrian University of Athens \\ Panepistemiopolis\\15784 Athens\\ Greece}
\email{kampouk@math.uoa.gr}
\email{ vnestor@math.uoa.gr }

\begin{abstract}
Let $\Omega \subset \C^d$ be an open set and $K_m , m=1,2,\dots$ an exhaustion of $\Omega$ by compact subsets of $\Omega$.
 We set $\Omega_m=K_m^o$ and let $X_m(\Omega_m)$ be a topological space of holomorphic functions on $\Omega_m$ between $A^\infty (\Omega_m)$ and $H(\Omega_m)$.
 Then we show that the projective limit of the family $X_m(\Omega_m)$, $m=1,2,\dots$,  under the restriction maps is homeomorphic and linearly isomorphic to the Fr\'{e}chet space $H(\Omega)$, idependently of the choice of the spaces $X_m(\Omega_m)$.
\end{abstract}

\subjclass[2010]{46A13 , 32C18 , 30H}

\keywords{ projective limits, holomorphic function, restriction map, Fr\'{e}chet spaces of holomorphic functions}

\maketitle
\section{Introduction}  
Let $Y$ be a space of holomorhic functions on a domain $\Omega$ in $\C^d, d=1,2,\dots$ . For most of the spaces $Y$ the following holds. If for every bump $V$ of $\Omega$ there exists a function $f_V$ in $Y$ which is not holomorphically extendable to $V$, then there exists a function $f$ in $Y$ not extendable to any $V$. Moreover, the set of such functions $f$ in $Y$ is dense and $G_{\delta}$ in $Y$.  (see \cite{JP}, \cite{N})

Examples of such spaces are the space $H(\Omega)$ of all holomorphic functions in $\Omega$ endowed with the topology of uniform convergence on compacta, the spaces $A^p(\Omega)$ and $H_p^\infty (\Omega)$, Bergman spaces, Hardy spaces etc, provided that $\Omega$ satisfies some good assumptions. 

Most of these spaces endowed with their natural topology, contain $A^{\infty}(\Omega)$ and they are contained in $H(\Omega)$ :  $A^{\infty}(\Omega) \subset Y \subset H(\Omega)$. Furthermore, the two inclusion maps $i_1\colon A^{\infty}(\Omega) \to Y$, $i_2 \colon Y \to H(\Omega)$ with $i_1(f)=f$ and $i_2(w)=w$ are continuous. We denote by $M(\Omega)$ the set of such topological function spaces on $\Omega$. 

Let $K_m$, $m=1,2, \dots$, be an exhaustion by compact subsets of the open set $\Omega \subset {\C}^d$ and $\Omega_m=K_m^o$. Let $X_m(\Omega_m)$ be any topological function space belonging to $M(\Omega_m)$. Let $X$ denote the projective (inverse) limit of the family $X_m(\Omega_m) , m=1,2,\dots$ where $T_{m,n}:X_m(\Omega_m) \to X_n(\Omega_n) , n \leq m$ is the restriction map $ f \to f|_{\Omega_n}$.

Then we show that $X$ is a topological vector space homeomorphic and linearly isomorphic to the Fr\'{e}chet space $H(\Omega)$ of all holomorphic functions on $\Omega$ endowed with the topology of uniform convergence on all compact subsets of $\Omega$, independently of the choice of the topological spaces  $X_m(\Omega_m)$ from  $M(\Omega_m)$. This holds even if the spaces  $X_m(\Omega_m)$ are neither complete, nor vector spaces.

\section{About projective limits}
We need the following particular case of projective limits (see  \cite{E}).

\begin{defin}
Let $(Y_n , \T_n ) , n=1,2,\dots$ be a sequence of topological spaces. For $n<m$ let $T_{m,n} \colon Y_m \to Y_n$ be a continuous map and let $T_{n,n}$ be the identity map from $Y_n$ to itself. We assume that for $m \leq n \leq s$ it holds $T_{s,m}=T_{n,m} \circ T_{s,n}$. 

Then the projective (or inverse) limit of this family is defined to be the following subset $Y$ of the cartesian product $\prod _{n=1}^{\infty} Y_n$ where
\[Y=\left \lbrace g=(g_1, g_2, \dots) : \text{ for every } n \leq m \text { it holds } T_{m,n}(g_m)=g_n \right\rbrace \]
 The topology $\T_Y$ on $Y$ we consider is the relative topology from  $\prod _{n=1}^{\infty} Y_n$ endowed with the product topology.
\end{defin}

\begin{prop}  \cite{E}
Under the above notation and assumption a net $g^i=(g_1^i , g_2^i  \dots) \in Y,  \in I$ converges to $g=(g_1,g_2,\dots) \in Y $ in the topology $\T_Y$ of Y if and only if $g^i_k \to g_k$ in the topology $T_k$ of $Y_k$ for all $k=1,2, \dots$.
\end{prop}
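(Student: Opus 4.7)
The plan is to reduce the statement to the well-known principle that convergence in a product topology is coordinate-wise, and then to transfer it to $Y$ using the fact that $\T_Y$ is the subspace topology inherited from $\prod_{n=1}^\infty Y_n$; a net with all entries in $Y$ converges to a point of $Y$ in the subspace topology precisely when it does so in the ambient product topology.

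For the forward implication, I would argue as follows. Each coordinate projection $\pi_k \colon \prod_n Y_n \to Y_k$ is continuous by definition of the product topology, hence its restriction to $Y$ is continuous as well. Given $g^i \to g$ in $\T_Y$, applying this restricted projection to the convergent net yields $g^i_k = \pi_k(g^i) \to \pi_k(g) = g_k$ in $T_k$, for every $k$.

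For the reverse implication, I would assume $g^i_k \to g_k$ in each $T_k$ and check the definition of convergence in $\T_Y$. Given any open neighborhood $U$ of $g$ in $\T_Y$, I would write $U = V \cap Y$ with $V$ open in the product, and then fit inside $V$ a basic cylinder $W = \prod_n W_n$ with $g \in W \subset V$, where $W_n = Y_n$ except for finitely many indices $n_1, \dots, n_s$, for which $W_{n_j}$ is an open neighborhood of $g_{n_j}$ in $Y_{n_j}$. The coordinate-wise hypothesis supplies, for each $j$, some $i_j \in I$ with $g^i_{n_j} \in W_{n_j}$ whenever $i \geq i_j$. Using the directedness of $I$, I pick $i_0 \in I$ dominating $i_1, \dots, i_s$, which gives $g^i \in W \cap Y \subset U$ for all $i \geq i_0$.

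I do not foresee any substantial obstacle, since the proposition is essentially a reformulation inside $Y$ of the defining property of the product topology. The only point requiring a little care is the passage from finitely many coordinate-wise tail conditions to a single common tail, which relies on the directedness of the net's index set $I$; this is the place where the use of nets (as opposed to a plain first-countability argument) is essential if one wants the statement in full generality of the spaces $(Y_n, T_n)$.
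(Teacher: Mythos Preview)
Your argument is correct and is the standard one: convergence in a subspace of a product is just coordinate-wise convergence, and you handle both directions cleanly, including the use of directedness to pass from finitely many tail conditions to a single one. The paper does not supply its own proof of this proposition; it simply cites Engelking's \emph{General Topology}, so there is nothing further to compare.
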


\section{Some function spaces}
Let $\Omega$ be an open subset of $\C^d$, $d \geq 1$. We denote by $H(\Omega)$ the space of holomorphic functions on $\Omega$ endowed with the topology of the uniform convergence on compacta.
 This is a Fr\'{e}chet space with seminorms $\Vert f \Vert _m = \sup_{z \in K_m} \vert f(z) \vert , m=1,2, \dots$ where $K_m$ is an exhaustion of $\Omega$ by compact subsets of $\Omega$; for instance we can take (see \cite{R}, \cite{G})
 \[K_m= \left \lbrace z \in \Omega : \Vert z \Vert \leq m \text{ and } dist(z, \Omega^c) \geq  \frac{1}{m} \right \rbrace\]
 In what follows we consider that $G$ is a bounded open subset of  $\C^d$ , although the spaces we will consider, can also be defined for an unbounded open set $G$, with some modifications in the definition of their natural topology.
 
 The space $A^{\infty}(G)$ is defined as the set of holomorphic functions on $G$ such that each mixed partial derivative of $f$ extends continuously on the closure $\overline{G}$ of $G$. The natural topology of $A^{\infty}(G)$ is defined by the seminorms 
 \[sup_{z \in G} \vert \frac{\partial^{a_1+ \dots +a_d} f}{\partial {z_1}^{a_1} \dots \partial {z_d}^{a_d}}(z)  \vert , a_1 ,\dots, a_d \in \left \lbrace 0,1,2, \dots \right \rbrace \text{, where } z=(z_1, \dots , z_d).\]
  This is also a Fr\'{e}chet space .
 
 Next we are interested in spaces $X(G)$ containing $A^{\infty}(G)$ and contained in $H(G)$, i.e. $A^{\infty}(G) \subset Y \subset H(G)$.
 We also require that the injections are continuous; that is, if a net  $f^i \in A^{\infty}(G)$, $i \in I$,  converges in the topology of  $A^{\infty}(G)$  to a function $f \in  A^{\infty}(G)$, then  $f^i \to f$ in the topology of $X(G)$, as well; and if a net $w^i \in X(G)$, $i \in I$, converges to $w \in X(G)  $ in the topology of $X(G)$ then $w^i \to w$ in the topology of $H(G)$.
We denote by $M(G)$ the set of all topological spaces as above.
 
Such spaces are the spaces $A^p(G)$ consisted of $f \in H(G)$ of which every mixed partial derivative of order less than or equal to $p$ extends continuously on $ \overline{G}$ and 
the spaces $H^p_\infty (G)$  consisted of $f \in H(G)$ of which every mixed partial derivative of order less than or equal to $p$ is bounded on  $ \overline{G}$.
The above two spaces are Banach spaces, provided $p<+\infty$ and Fr\'{e}chet spaces if $p=+\infty$. 
Their topologies are defined by the seminorms
\[sup_{z \in G} \vert \frac{\partial^{a_1+ \dots +a_d} f}{\partial {z_1}^{a_1} \dots \partial {z_d}^{a_d}}(z) \vert  ,  a_1+ \dots +a_d \leq p \text{, where }  z=(z_1, \dots , z_d).\]

Another example of such a space is the p-Bergman space, $0<p<+\infty$, containing all holomorphic functions $f$ on $G$, such that $\int _G \vert f \vert ^p d\nu < \infty$, where $d\nu$ denotes the Lebesgue measure on $G \subset \C^d$.  (see\cite{DS})

Other examples are Hardy spaces on the disc or the ball or on more general domains (see \cite{D} , \cite{R2} \cite{S}) , as well as subclasses of the Nevanlinna class , BMOA, VMOA, Dirichlet spaces on the disc etc.

All the previous spaces are complete spaces . However, we can also consider non complete spaces containing  $A^{\infty}(G)$ and contained in $H(G)$ such  that the two injections are continuous. 
For instance, consider the set $Y=A^{\infty}(G)$ with the relative topology induced by $H(G)$; that is, for $f_n , f \in A^{\infty}(G)$ the convergence in the new space is the uniform convergence on compacta $f_n \to f$. 
This space contains $A^{\infty}(G)$, is contained in  $H(G)$ and the injections $i_1:A^{\infty}(G) \to Y$, $ i_2 \colon Y \to H(G)$ with $i_1(f)=f$ and $i_2(g)=g$ are continuous where $A^{\infty}(G)$ and  $H(G)$ are endowed with their natural topologies and $Y$ with the relative topology from $H(G)$. 
But this space is not complete, in general. 
For instance, if $(\overline {G})^o =G$ or if $G$ is a bounded, open subset of $\C$, then $A^{\infty}(G)$ with the relative topology induced by $H(G)$  is not complete.

It is also possible that a space belongs to $M(G)$ although it is not even a vector space. To give such an example we consider the set
\[Y=A^{\infty}(G) \cup \left\lbrace f \colon G \to \C \text{ holomorphic such that } \vert f(z) \vert <1 \text{, for all } z\in G \right\rbrace , \]
endowed with the topology of uniform convergence on $G$. Then $A^{\infty}(G) \subset Y \subset H(G)$ and if $f_n , f \in A^{\infty}(G)$ are such that $f_n \to f$ in the topology of $ A^{\infty}(G)$, it follows trivially that  $f_n \to f$ in the topology of $Y$. Also if $\phi_n , \phi \in Y$ are such that $\phi_n \to \phi$ in the topology of $Y$, then obviously the convergence is uniform on each compact subset of $G$, as well.

\section{The result}
We start with the following definition

\begin{defin}
Let $G$ be a bounded open subset of $\Omega^d$. We denote by $M(G)$ the set of all topological spaces $X(G)$ satisfying $A^{\infty}(G) \subset X(G) \subset H(G)$ and such that the two injections $i_1 \colon  A^{\infty}(G) \to X(G)$, $i_1(f)=f$ and $i_2 \colon X(G) \to H(G)$, $i_2(g)=g$ are continuous; that is, if a net $f^i \in A^{\infty} (G)$, $i \in I$, converges to $f \in A^{\infty}(G)$, in the topology of $A^{\infty}(G)$, then $f^i \to f$ in the topology of $X(G)$, as well. Moreover, if a net $w^i \in X(G)$ converges to $w \in X(G)$,in the topology of $X(G)$, then $w_i \to w$ uniformly on compacta of $G$.
\end{defin}

 Let $\Omega$ be an open subset of $C^d$, bounded or unbounded. 
 Let $K_m , m=1,2,\dots$ be an exhausting family of compact subsets of $\Omega$; that is, $K_m \subset K_{m+1}^o$ and $\cup_{m=1}^{\infty} K_m =\Omega$ (see \cite{R}),(\cite{G}).
  We set $\Omega_m =K_m^o$; then $ \Omega_m \subset \overline{\Omega_m} \subset \Omega_{m+1}$. For each $m=1,2, \dots$ let $X_m(\Omega_m)$ be a space in $M(\Omega_m)$.
 
 For $n \leq m$ we consider the restriction map $T_{m,n} \colon X_m(\Omega_m) \to X_n(\Omega_n)$ given by $T_{m,n}(f)=f| _{\Omega_n}$, for every $f \in X_m(\Omega_m)$.
  These maps are well defined and continuous. Indeed, $X_m(\Omega_m) \subset H(\Omega_m)$ and $\overline{\Omega_n} \subset \Omega_m$; thus, for every $f \in X_m(\Omega_m)$, it follows that $T_{m,n}(f)=f| _{\Omega_n} \in A^{\infty}(\Omega_n) \subset X_n(\Omega_n)$, provided that $n<m$. 
 Also if a net $f^i \in X_m(\Omega_m)$ converges to $f \in X_m(\Omega_m)$, in the topology of $X_m(\Omega_m)$, then by assumption, it converges uniformly on compacta of $\Omega_m$. 
 Weierstrass theorem yields uniform convergence on compacta of every mixed partial derivative. 
 Since $\overline{\Omega_n}$ is a compact subset of $\Omega_m$, it follows that $f^i \mid_{\Omega_n} \to f|_{\Omega_n}$ in the topology of $A^{\infty}(\Omega_n)$. 
 By assumption, this implies $f^i \to f$ in the topology of $ X_n(\Omega_n)$. Therefore, $T_{m,n} :X_m(\Omega_m) \to X_n(\Omega_n)$ is continuous. One can also very easily verify that $T_{m,n}=T_{s,n} \circ T_{m,s}$, for all $n \leq s \leq m$.
 Therefore, we can define the projective limit $X$, which is a topological space (see \cite{E}).
 
 \begin{thm}
 Under the above assumptions and notation the topological space $X$ is homeomorphic to $H(\Omega)$.
 \end{thm}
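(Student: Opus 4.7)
The natural candidate is the restriction map $\Phi \colon H(\Omega) \to X$ given by
\[
\Phi(f) = \bigl(f|_{\Omega_1}, f|_{\Omega_2}, \dots \bigr).
\]
My plan is to verify in order that $\Phi$ lands in $X$, is a bijection, is continuous, and has a continuous inverse, using the characterization of convergence in the projective limit from the proposition in Section 2 to reduce everything to component-wise statements.

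That $\Phi(f)$ lies in $X$ follows because $\overline{\Omega_m}$ is a compact subset of $\Omega$, so $f|_{\Omega_m}$ extends smoothly to the closure and therefore lies in $A^{\infty}(\Omega_m) \subset X_m(\Omega_m)$; the compatibility $T_{m,n}(\Phi(f)_m) = \Phi(f)_n$ is tautological. Injectivity follows from $\bigcup_m \Omega_m = \Omega$, which holds since each $K_m \subset \Omega_{m+1}$ and $\bigcup_m K_m = \Omega$. For surjectivity, given $(g_m) \in X$, the compatibility $g_m|_{\Omega_n} = g_n$ for $n \le m$ lets me glue the $g_m$'s into a well-defined holomorphic $f$ on $\Omega$ with $\Phi(f) = (g_m)$.

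For continuity of $\Phi$, by the proposition of Section 2 it suffices to check that $f^i \to f$ in $H(\Omega)$ forces $f^i|_{\Omega_m} \to f|_{\Omega_m}$ in each $X_m(\Omega_m)$. Uniform convergence on the compact set $\overline{\Omega_{m+1}} \subset \Omega$, together with the Cauchy estimates for several complex variables, yields uniform convergence of all mixed partial derivatives on $\overline{\Omega_m}$, hence convergence in $A^{\infty}(\Omega_m)$; continuity of the injection $A^{\infty}(\Omega_m) \hookrightarrow X_m(\Omega_m)$ then gives convergence in $X_m(\Omega_m)$. Conversely, if $\Phi(f^i) \to \Phi(f)$ in $X$, then $f^i|_{\Omega_m} \to f|_{\Omega_m}$ in $X_m(\Omega_m)$ for every $m$, and composing with the injection $X_m(\Omega_m) \hookrightarrow H(\Omega_m)$ produces uniform convergence on compacta of $\Omega_m$. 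Since every compact $K \subset \Omega$ sits inside some $\Omega_m$ (because $K \subset K_j \subset \Omega_{j+1}$ for some $j$), this gives $f^i \to f$ in $H(\Omega)$.

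No single step looks deep; the argument is essentially a bookkeeping exercise threading compact-open convergence through the chain $A^{\infty}(\Omega_m) \subset X_m(\Omega_m) \subset H(\Omega_m)$. The one place needing care is the upgrade from compact-open convergence in $H(\Omega)$ to $A^{\infty}(\Omega_m)$-convergence inside the continuity proof, where it is essential to begin on the slightly larger domain $\Omega_{m+1}$, so that $\overline{\Omega_m}$ lies in its interior before Cauchy's estimates can be invoked.
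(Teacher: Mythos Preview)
Your proof is correct and follows essentially the same route as the paper's: the same restriction map $\Phi$, the same four verifications (well-definedness via $A^{\infty}(\Omega_m)\subset X_m(\Omega_m)$, bijectivity by gluing, and bicontinuity via Proposition~2.2 together with the two injections). The only cosmetic difference is that where the paper invokes the Weierstrass theorem directly on the compact set $\overline{\Omega_m}\subset\Omega$, you pass through the intermediate compact $\overline{\Omega_{m+1}}$ and appeal to Cauchy estimates; this is the same mechanism spelled out one step more explicitly.
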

 
 \begin{proof}
 Let $f \in H(\Omega)$; then $f|_{\Omega_m} \in A^{\infty}(\Omega_m) \subset X_m(\Omega_m)$.
  Thus, if we set $g_m=f|_{\Omega_m}$ and $g=(g_1 , g_2, \dots)=g(f)$, then $g \in \prod_{m=1}^{\infty}X_m(\Omega_m)$.
 For $n \leq m$ one can easily verify that $T_{m,n}(g_m)=g_n$. Therefore, $g \in X$.  
  In this way we have defined a map $\Phi \colon H(\Omega) \to X$, where $\Phi(f)=g(f)$.
  
  This map is one to one. Indeed, if $g(f)=g(w)$ it follows that $g |_{\Omega_m} = w|_{\Omega_m}$ for all $m=1,2,\dots$. Since $\cup_m \Omega_m=\Omega $, it follows that $f(z)=w(z)$ for all $z \in \Omega$; thus $f=w$. This proves that the map $\Phi$ is one to one.
  
  The map is also onto. Indeed, if $g=(g_1,g_2,\dots) \in X$, then for every $z \in \Omega$ there exists a least natural number $n(z)$ so that $z \in \Omega_m$, for all $m \geq n(z)$, because $\Omega_m \subset \Omega_{m+1}$ and $\cup_m \Omega_m =\Omega$. 
 Futher, since $T_{m,n(z)}g_m=g_{n(z)}$, it follows that $g_m(z)=g_{n(z)}(z)$, for all $m \geq n(z)$.
  We set $f(z)=g_{n(z)}(z)$. We will show that this function is holomorphic on $\Omega$. Indeed, let $z_0 \in \Omega$; then $z_0 \in \Omega_{n(z_0)}$. 
  Since $\Omega_{n(z_0)}$ is open, there exists a ball $B(z_0 , r)$ contained in $\Omega_{n(z_0)}$. For each $z \in B(z_0 , r)$, we have $n(z) \leq n(z_0)$ and $f(z)=g_{n(z)}(z)=g_{n(z_0)}(z)$. But $g_{n(z_0)} \in X_{n(z_0)}(\Omega_{n(z_0)}) \subset H(\Omega_{n(z_0)})$. It follows that f is holomorphic in $B(z_0 , r)$. Since $z_0$ is arbitary in $\Omega$, it follows that $f \in H(\Omega)$. It is immediate that $\Phi(f) = g$. Since $g$ is arbitrary in $X$, it follows that the map $\Phi$ is onto.
  
 Further, $\Phi$ is continuous.
 Indeed, if a net $f^i \in H(\Omega)$, $i \in I$ converges to $f \in H(\Omega)$,  uniformly on compacta, by Weierstrass theorem the same holds for every mixed partial derivative. Since $\overline{\Omega_m}$ is a compact subset of $\Omega$ it follows that $f^i\mid_{\Omega_m} \to f|_{\Omega_m}$ in the topology of $A^{\infty}(\Omega_m)$. By assumption this implies the convergence $f^i|_{\Omega_m} \to f \mid_{\Omega_m}$ in the topology of $X_m(\Omega_m)$. According to Proposition 2.2, it follows $\Phi(f^i) \to \Phi(f)$ in the topology of $X$. Thus, the map $\Phi$ is continuous.
  
  Finally, we will show that $\Phi^{-1}$ is also continuous. Suppose that $\Phi(f^i)=(f^i|_{\Omega_1} ,  f^i|_{\Omega_2}, \dots) \to \Phi(f) = (f |_{\Omega_1} ,  f|_{\Omega_2}, \dots)$ in the topology of $X$. 
 According to Proposition 2.2  this implies $f^i|_{\Omega_m} \to f|_{\Omega_m}$ for all $m=1,2, \dots$ in the topology of $X_m(\Omega_m)$, which, by assumption, implies the uniform convergence on compacta of $\Omega_m$.
 We will show that $f^i \to f$ uniformly on each compact set $K \subset \Omega$.
 Let $K$ be such a compact set. 
 Since $K \subset \Omega= \cup_{m=1}^{\infty}\Omega_m$ and the open sets $\Omega_m$ satisfy $\Omega_m \subset \Omega_{m+1}$, by compactness of $K$ it follows that there exists $m_0$ such that $K \subset \Omega_{m_0}$. 
 But $f^i|_{\Omega_{m_0}} \to f|_{\Omega_{m_0}}$ uniformly on each compact of $\Omega_{m_0}$; in particular on $K$. Thus, $ f^i \to f$ uniformly on each compact set $K \subset \Omega$ and therefore, $f^i \to f$ in the topology of $H(\Omega)$. Thus, $\Phi^{-1}$ is continuous, as well and $\Phi$ is a homeomorphism. This completes the proof.
 \end{proof}
 
\begin{rem}
It is well known and easy to verify that the denumerable projective limit of Fr\'{e}chet spaces is also a Fr\'{e}chet space. In our case $X$ is homeomorphic to the Fr\'{e}chet space $H(\Omega)$, although the spaces $X_m(\Omega_m)$ may not be Fr\'{e}chet spaces. Furthermore, independently of the fact that the spaces $X_m(\Omega_m)$ are topological vector spaces or they do not have linear structure, one can easily verify that $X$ is always a topological vector space and that the map $\Phi$ is a linear isomorphism. 
\end{rem} 
 
 \begin{rem}
 Let us consider the compact set 
 \[K_t= \left\lbrace z \in \Omega \colon \vert z \vert \leq t , dist (z, \Omega^c) \geq \frac{1}{t} \right\rbrace \text{, } t \geq t_0,\]
 for a  convenient  $t_0 >0$ such that $K_{t_0}^o \neq \emptyset$. Then for  $t<s$ it follows $K_t \subseteq K_s^o$ and $\cup _{t \geq t_0}K_t=\Omega$.
 We set $\Omega_t =K_t^o$ and let $X_t(\Omega_t)$ be any element of $M(\Omega_t)$. 
 One can prove that the projective limit of the family $X_t(\Omega_t)$ as $t \to +\infty$, where $T_{s,t}$ is the restriction map $f \to f|_{\Omega_t}$, $t<s$, is homeomorphic and linearly isomorphic to $H(\Omega)$. The proof is similar to that of Theorem 4.2.
 \end{rem}
 
\subsection*{Acknowledgement} 
We would like to thank M.Fragoulopoulou for her interest in this work.


\begin{thebibliography}{100}
 \bibitem [D] {D}  P.L. Duren, \emph{Theory of $H^p$ spaces}, Academic Press, N.Y. 1970.
 \bibitem [D-S] {DS} P. Duren, A. Schuster, \emph{Bergman spaces}, Mathematical Surveys and Monographs vol.100, AMS.
 \bibitem [E] {E} R. Engelking, \emph{General Topology}, PWN, Warszawa, 1977.
 \bibitem [G] {G} P.M. Gauthier, \emph{ Lectures on Several Complex Variables}, Birkh\"{a}user, Heidelberg, N.Y., Dordrecht, London.
\bibitem [J-P] {JP} M. Jarnicki , P. Pflug, \emph {Extension of Holomorphic Functions}, De Gruyter expositions in Mathematics 34, Berlin, N.Y. 2000.
\bibitem [N] {N} V. Nestoridis, \emph {Domains of Holomorphy}, arxiv:1701.00734.
\bibitem [R] {R} W. Rudin, \emph{Real and Complex Analysis}, McGraw-Hill, co N.Y., 1966.
\bibitem [R2] {R2} W. Rudin, \emph{Function Theory in the Unit Ball of $\C^n$}, Springer-Verlag, Berlin Heidelberg, 2008.
\bibitem [S] {S} E. Stein, \emph{Boundary Behavior of Holomorphic Functions of Several Complex Variables}, Math. Notes 11, Princeton University Press, 2016.
\end{thebibliography}
 \end{document}